\newcommand{\comment}[1]{}
\newtheorem{lem}{Lemma}%[section]
\newtheorem{propn}{Proposition}
\newtheorem{thm}{Theorem}
\theoremstyle{remark}
\newtheorem*{Rem}{Remark}
\theoremstyle{definition}
\newcommand{\R}{\mathbb R}
\newcommand{\Z}{\mathbb Z}
\newcommand{\N}{\mathbb N}
\DeclareMathOperator{\lcm}{lcm}
\newcommand{\D}{\delta}
\newcommand{\VE}{\varepsilon}
\newcommand{\A}{\alpha}
\newcommand{\B}{\beta}
\newcommand{\lm}{\lambda}
\newcommand{\be}{\begin{equation}}
\newcommand{\ee}{\end{equation}}
\newcommand{\bee}{\begin{equation*}}
\newcommand{\eee}{\end{equation*}}
\begin{document}
\title{A new proof of S\'ark\"ozy's theorem}
\author{Neil Lyall}
%\thanks{Both authors were partially supported by NSF grants.}

\address{Department of Mathematics, The University of Georgia, Athens, GA 30602, USA}
\email{lyall@math.uga.edu}

\subjclass[2000]{11B30}
%\keywords{}

\begin{abstract}
It is a striking and elegant fact (proved independently by Furstenberg and S\'ark\"ozy) that in any subset of the natural numbers of positive upper density there necessarily exist two distinct elements whose difference is given by a perfect square. In this article we present a new and simple proof of this result by adapting an argument originally developed by Croot and Sisask to give a new proof of Roth's theorem. %on arithmetic progressions.
\end{abstract}
%\date{\today}
\maketitle
%\tableofcontents

\setlength{\parskip}{3pt}
%\setlength{\parindent}{0pt}

%%%%%%%%%%%%%%%%%%%%%%%%%%%%%%%%%%%%%%%%%%%%%%%

\begin{center}
\emph{Dedicated to Steve Wainger on the occasion of his retirement}
\end{center}

\section{Introduction}

Let $D(N)$ denote the maximum size of a subset of $\{1,\dots,N\}$ that contains no perfect (non-zero) square differences. In other words, $D(N)$ is the threshold such that if $A\subseteq\{1,\dots,N\}$ with $|A|>D(N)$, then the set $A$ will necessarily contain two distinct elements whose difference if a perfect square.

  In this note we shall be concerned with the behavior of this quantity for large values of $N$ and at the outset we encourage the reader to convince herself of the essentially trivial upper and lower bounds for $D(N)$ of approximate quality $N/4$ and $\sqrt{N}$ respectively, and furthermore that any improvements on these bounds would be less than trivial to achieve. In Appendix \ref{trivial} we give full justification for the following specific bounds
\be\label{start}\sqrt{N}-1\leq D(N)\leq (N+543443)/4.\ee

It was conjecture by Lov\'asz that $D(N)\leq\D N$ for any $\D>0$, provided that $N$ is sufficiently large, or equivalently that in any subset of the natural numbers of positive upper density\footnote{\ Recall that $A\subseteq\N$ is said to have positive upper density whenever
$\limsup_{N\rightarrow\infty}|A\cap\{1,\dots,N\}|/N>0.$} there necessarily exist two distinct elements (and hence infinitely many pairs of distinct elements) whose difference is given by a perfect square. This conjecture was subsequently proven to be correct, independently, by S\'ark\"ozy and Furstenberg.
\begin{thm}[S\'ark\"ozy \cite{Sarkozy}/Furstenberg \cite{Fur}]\label{1}
\[\lim_{N\rightarrow\infty}\frac{D(N)}{N}=0\]
\end{thm}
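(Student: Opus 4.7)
The plan is to establish a density-increment dichotomy: if $A \subseteq \{1, \dots, N\}$ has density $\alpha = |A|/N$ and contains no nonzero square difference, then there is an arithmetic progression $P \subseteq \{1,\dots,N\}$, whose common difference is itself a perfect square, on which $A$ has relative density at least $\alpha + c\alpha^2$. Iterating this increment $O(1/\alpha)$ times forces the density above $1$, which is a contradiction unless $\alpha \to 0$ as $N \to \infty$. The requirement that $P$ have squared common difference is essential: after rescaling $P$ to an interval of consecutive integers, square differences in the rescaled set correspond to square differences in the original $A$, so the hypothesis ``no nonzero square differences'' is preserved along the iteration.

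\textbf{Counting form.} First I would fix a scale $k$ of order $\sqrt{N}$ and consider
\[
\Lambda(A) = \frac{1}{k} \sum_{n=1}^{k} \sum_{x} 1_A(x) 1_A(x - n^2).
\]
The absence of nonzero square differences in $A$ forces $\Lambda(A) = 0$, while a ``random-like'' set of density $\alpha$ in $\{1,\dots,N\}$ would produce $\Lambda(A) \approx \alpha^2 N$. The discrepancy between these two quantities, of order $\alpha^2 N$, is what the remainder of the argument will convert into arithmetic structure.

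\textbf{Croot--Sisask almost periodicity.} Rather than expanding $\Lambda(A)$ on the Fourier side and separating major and minor arcs as in S\'ark\"ozy's original argument, I would follow the route of Croot and Sisask: directly produce a large set $T \subseteq \Z$ of almost-periods of the convolution $f \ast \mu_k$, where $f = 1_A - \alpha \cdot 1_{[1,N]}$ and $\mu_k$ is the normalized counting measure on $\{1^2, \dots, k^2\}$. A probabilistic sampling argument of Croot--Sisask type should yield such a $T$, essentially of density a positive power of $\alpha$, for which $\|f \ast \mu_k(\cdot - t) - f \ast \mu_k\|_{L^2}$ is uniformly small for all $t \in T$. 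By the very construction of $\mu_k$, elements of $T$ will lie in the difference set of the squares, and this is what eventually funnels the density increment onto a progression with squared common difference.

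\textbf{Increment, iteration, and the main obstacle.} From $T$ I would then, via a Bogolyubov--Freiman type sumset argument, locate inside a short iterated sumset of $T$ a long arithmetic progression $P$ whose common difference is a perfect square. The $L^2$ almost-periodicity of $f \ast \mu_k$ would then transfer to approximate invariance of local averages of $1_A$ along translates of $P$, after which a standard pigeonhole step produces a sub-progression on which the density of $A$ exceeds $\alpha + c\alpha^2$. The main technical obstacle will be quantitative: the squares are a sparse base set, so each application of the Croot--Sisask sampling lemma is more expensive than in the setting of Roth's theorem, and at every stage of the iteration one must verify that the ambient interval is still long enough to support both the sampling step and the subsequent passage from almost-periods to a squared-difference progression. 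Managing this bookkeeping through the $O(1/\alpha)$ iterations is the delicate part; the payoff for working with Croot--Sisask almost-periodicity in place of spectral analysis is the elimination of any explicit treatment of exponential sums over squares.
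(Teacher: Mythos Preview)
The paper's argument is quite different from what you outline. The Croot--Sisask reference is to their 2009 proof of Roth's theorem, whose key device is to form the \emph{union} $B = A' \cup (A' + q_\varepsilon^2)$ for a carefully chosen square shift $q_\varepsilon^2$ (with $q_\varepsilon = \operatorname{lcm}\{q \le \varepsilon^{-2}\}$); one then uses Weyl-sum estimates (Proposition~\ref{Weyl Estimates}) to show that $B$ has very few square differences, and a Varnavides-type count (Lemma~\ref{V}) to bound those square differences from below in terms of $D(M)/M$. Comparing the two bounds gives the recursion $D(N)/N \le (3/4)\,D(M)/M$ directly. There is no density increment onto a subprogression, and exponential sums over squares are used in an essential way.

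Your proposal instead invokes the later Croot--Sisask \emph{almost-periodicity} machinery and claims to avoid Weyl sums altogether. There is a real gap at the step where you pass from the set $T$ of almost-periods to a progression with \emph{square} common difference. The sampling lemma, applied to $f * \mu_k$ with $\mu_k$ supported on $\{1^2,\dots,k^2\}$, yields $T$ contained (up to a fixed translate) in differences of squares; but differences of squares are not squares, and a Bogolyubov--Fre{\u\i}man argument applied to iterated sumsets of $T$ produces a progression whose common difference lies merely in the additive span of $\{n^2 : n \le k\}$, which for $k$ not tiny is all of $\Z$ (or a full residue class modulo a small modulus). Nothing in that step forces the common difference to be a perfect square. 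In the Roth setting this issue is invisible because any progression will do; here the entire iteration depends on landing on a square-difference progression, and it is precisely this arithmetic constraint that the Weyl-sum input supplies in the paper's proof. A secondary difficulty you acknowledge but do not resolve is that the squares have density $N^{-1/2}$, so the Croot--Sisask sampling bound $|T| \ge \alpha^{O(\varepsilon^{-2})}|S|$ is already vanishingly small before any iteration begins. Without a substitute for the exponential-sum input, the sketch does not close.
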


The purpose of this note is to give a new and simple proof of this result by adapting an argument that was originally developed by Croot and Sisask \cite{CS} to give a new proof of Roth's theorem on three term arithmetic progressions. 
In particular we will establish the following result, which clearly implies Theorem \ref{1}.

\begin{thm}\label{2} Let $M,N\in\N$, then
\[\frac{D(N)}{N}\leq\frac{3}{4}\frac{D(M)}{M}\]
provided $N\geq e^{CM^7}$, for some absolute constant $C>0$, and $M$ is sufficiently large.
\end{thm}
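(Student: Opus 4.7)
I would prove Theorem \ref{2} by a density-increment argument. Suppose, for contradiction, that $A\subseteq\{1,\dots,N\}$ is free of nonzero square differences and has density $\delta:=|A|/N>(3/4)\,D(M)/M$. The goal is to produce a perfect square $q^2$ and an arithmetic progression $P=a+q^2\{0,1,\dots,M-1\}\subseteq\{1,\dots,N\}$ on which $|A\cap P|/M\ge (4/3)\,\delta$. The pullback $\{n:a+q^2 n\in A\}$ then lies in an interval of length $M$, is still square-difference-free (a difference $s^2$ in the pullback corresponds to a $(qs)^2$ difference in $A$), and has more than $D(M)$ elements, contradicting the definition of $D(M)$.

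\textbf{An $L^2$-deficiency from the absence of square differences.}
Fix $L$ of order $\sqrt{N}$ (to be tuned), let $S=\{s^2:1\le s\le L\}$, and set $\nu=1_S/|S|$. The hypothesis $A\cap(A+s^2)=\emptyset$ for $1\le s\le L$ forces $\langle 1_A,\,1_A*\tilde\nu\rangle=0$, where $\tilde\nu(x)=\nu(-x)$. Comparing with the ``random'' value $\approx\delta^2 N$ of this inner product, writing $1_A=\delta\,1_I+f$ with $I=\{1,\dots,N\}$, and invoking Cauchy--Schwarz on the remainder gives
\[\bigl\|\,1_A*\tilde\nu\,-\,\delta\,1_I\,\bigr\|_2^2 \;\gtrsim\; \delta^3\,N.\]
So $1_A*\tilde\nu$ fluctuates substantially away from the constant $\delta$ in $L^2$.

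\textbf{Croot--Sisask almost periodicity and extraction of the AP.}
The heart of the proof is to convert this $L^2$-deficiency into a long AP of square step on which $A$ is denser. Applying the Croot--Sisask lemma to $1_A*\tilde\nu$ produces a large set $T$ of $L^{2k}$-almost periods; crucially $T\subseteq S-S$, so each $t\in T$ is a difference of two squares. Iterating $k$ times enlarges $T$ to its $k$-fold sumset $B=T+\cdots+T$ ($k$ summands), on which the convolution $1_A*\tilde\nu*\nu^{*k}$ is essentially constant. A pigeonhole together with a Weyl-type equidistribution argument inside $B$ (whose elements are $\pm$-combinations of $2k$ squares) extracts an arithmetic progression of length $M$ and common difference a perfect square $q^2$. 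Coupling this with the $L^2$-deficiency of the previous step locates a translate $P$ on which $|A\cap P|/M\ge (4/3)\,\delta$, closing the increment.

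\textbf{Main obstacle.}
The principal challenge is the quantitative bookkeeping needed to achieve $N\ge e^{CM^7}$. One must simultaneously control the square-root loss inherent in $|S|\approx\sqrt{N}$, the logarithmic loss per Croot--Sisask iteration, the iteration count $k\sim\log(1/\delta)$ required to upgrade $L^{2k}$ almost periodicity to near-$L^\infty$ constancy, and the final reduction of the sumset $B$ to an arithmetic progression with an honest square step of controlled size. Each contributes a polynomial loss in $\log N$, and the exponent $7$ should emerge from an optimised balance. Ensuring that the resulting AP simultaneously has length $M$, perfect-square step, and hosts the $(4/3)$-density increment is the most delicate part of the argument.
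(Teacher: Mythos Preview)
Your approach differs substantially from the paper's, and contains a genuine gap.

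The paper does \emph{not} run a density-increment argument, nor does it use the Croot--Sisask almost-periodicity lemma. It adapts instead the earlier Croot--Sisask proof of Roth's theorem (Proc.\ AMS 2009), whose mechanism is quite different. Take $A\subseteq\{1,\dots,N\}$ extremal with no square differences, $|A|=D(N)$. Fix $\varepsilon\approx(\log N)^{-1/2}$, set $q_\varepsilon=\lcm\{1\le q\le\varepsilon^{-2}\}$, and define $B=A'\cup(A'+q_\varepsilon^2)$ where $A'=A\cap\{1,\dots,N-q_\varepsilon^2\}$. Disjointness gives $|B|\ge(5/3)|A|$. The key step is showing that $B$ has at most $O(\varepsilon N^{3/2})$ square differences: writing the count as $\int_0^1|\widehat{B}(\alpha)|^2\widehat{S}(\alpha)\,d\alpha$ and using $\widehat{B}(\alpha)=\widehat{A'}(\alpha)(1+e^{-2\pi i q_\varepsilon^2\alpha})$ together with the fact that $A'$ itself has zero square differences, one reduces to the uniform bound $|\cos(2\pi q_\varepsilon^2\alpha)-1|\,|\widehat{S}(\alpha)|\ll\varepsilon\sqrt{N}$. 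On minor arcs the Weyl inequality controls $\widehat{S}$; on a major arc $\mathbf{M}_{a/q}$ with $q\le\varepsilon^{-2}$ one has $q\mid q_\varepsilon^2$, so $q_\varepsilon^2\alpha$ is close to an integer and the cosine factor is tiny. A Varnavides-type lemma then gives a lower bound of the form $\bigl(|B|/N-(D(M)+2)/M\bigr)M^{-5/2}N^{3/2}$ for the same count, and comparing the two bounds forces $|B|/N\le(5/4)D(M)/M$, hence $D(N)/N\le(3/4)D(M)/M$. The condition $N\ge e^{CM^7}$ emerges simply from requiring $C_0 M^{5/2}/\sqrt{\log N}\le 1/M$.

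Your proposal, by contrast, seeks a density increment onto a length-$M$ progression with perfect-square step via almost-periodicity. The gap is at the sentence ``a Weyl-type equidistribution argument inside $B$ \dots\ extracts an arithmetic progression of length $M$ and common difference a perfect square $q^2$.'' The Croot--Sisask almost-periods of $1_A*\tilde\nu$ lie in $S-S$ (differences of two squares), and the iterated sumset $T+\cdots+T$ consists of signed sums of $2k$ squares; nothing in that machinery produces a long progression whose step is a \emph{single} perfect square, which is precisely what you need for the pullback to remain square-difference-free. In the paper's argument this issue never arises because the square shift $q_\varepsilon^2$ is written down explicitly, and its highly-composite base $q_\varepsilon$ is exactly what kills the major-arc contribution. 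Your sketch provides no substitute for this step, and almost-periodicity alone does not supply it; without it the increment cannot close.
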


%\subsection*{Some remarks on quantitative bounds and other polynomial differences}
\subsubsection*{Remark on quantitative bounds}
Although its proof is simple, Theorem \ref{2} patently leads to quantitative upper bounds of the quality $N/\log_*N$ for $D(N)$ that are extremely weak\footnote{ \ Recall that $\log_*N$ is the height required for a tower of 2's to exceed $N$.} in comparison to the current best known upper bound, namely
\be
D(N)\leq CN/(\log N)^{\frac{1}{4}\log\log\log\log N}
\ee
for some absolute constant $C>0$, which was established by Pintz, Steiger and Szemer\'edi in \cite{PSS} using an ingenious and intricate Fourier analytic argument. 
For extremely readable accounts of easier arguments leading to intermediate bounds of the quality $N/(\log\log N)^{1/11}$ and $N\log\log N/\log N$, see Green \cite{Green} and Lyall and Magyar \cite{neil}% (an exposition of this special case of the more general arguments in \cite{LM1})
, respectively.

We further note that it is conjectured that $D(N)\geq N^{1-\VE}$ for any $\VE>0$, provided $N$ is sufficiently large (with respect to $\VE$), and that Ruzsa \cite{Ruzsa} has demonstrated this conjecture to be true for all $\VE\geq0.267$.  

\comment{
The secondary objective of this note is to demonstrate a second proof of Theorem \ref{1}, a simplification of a more general argument due to the author and Magyar \cite{LM1}, that leads to the following intermediate bound.

\begin{thm}[Lyall and Magyar \cite{LM1'}, see also \cite{LM1}]\label{Thm4}
There exists an absolute constant $C>0$ such that
\[\frac{D(N)}{N}\leq C\,\frac{\log\log N}{\log N}.\]
\end{thm}
The proof of this result will be presented in Section \ref{ProofThm4} below.
}

\subsubsection*{Remark on other polynomial differences}
At this point the reader is presumably curious to know what is so special about square differences. The following Theorem gives a complete answer to this question.

\begin{thm}[Kamae and Mend\`es France \cite{KMF}]\label{Thm3}
Let $f\in\Z[n]$ and $D_f(N)$ denote the maximum size of a subset of $\{1,\dots,N\}$ that contains no two distinct elements whose difference is given by $f(n)$ for some $n\in\Z$. Then,
\be\label{KM}\lim_{N\rightarrow\infty}\frac{D_f(N)}{N}=0\ee
if and only if $f$ is an \emph{intersective} polynomial, namely if $f$ has a root modulo $q$ for every $q\geq2$.
\end{thm}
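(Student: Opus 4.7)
The statement splits into two directions of very different difficulty.

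\emph{Necessity of intersectivity.} If $f$ is not intersective, pick $q\geq 2$ for which $f$ has no root modulo $q$. Then $A=\{m\in\{1,\dots,N\}: q\mid m\}$ satisfies $|A|\geq\lfloor N/q\rfloor$ and contains no two distinct elements whose difference is a value of $f$, since every non-zero difference in $A$ is divisible by $q$ while no value of $f$ is. Hence $D_f(N)/N\geq 1/q+o(1)>0$, contradicting \eqref{KM}.

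\emph{Sufficiency of intersectivity.} I would adapt the proof of Theorem \ref{2} step by step, the one essential new input being that intersectivity produces, for every $q\geq 2$, an integer $r_q$ with $f(r_q)\equiv 0\pmod q$, hence a factorisation $f(r_q+qm)=q\,g_q(m)$ with $g_q\in\Z[m]$. Thus the $f$-difference structure on an AP of common difference $q$ anchored at $r_q$ coincides, after dilation by $1/q$, with the $g_q$-difference structure on an interval. The argument then runs in density-increment mode: given a dense $A\subseteq\{1,\dots,N\}$ free of $f$-differences, the Croot-Sisask almost-periodicity machinery of Theorem \ref{2}---applied with perfect squares replaced throughout by values of $f$---yields a small density increment of $A$ on some AP of common difference $q$ starting near $r_q$; rescaling converts this into a denser set $A'\subseteq\{1,\dots,N'\}$ avoiding $g_q$-differences, and one iterates.

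The principal obstacle is verifying that iteration is legitimate, i.e.\ that $g_q$ is itself intersective. Given a modulus $Q$, a root of $g_q$ modulo $Q$ corresponds to some $n_0\equiv r_q\pmod q$ with $qQ\mid f(n_0)$; existence of such $n_0$ requires the initial root $r_q$ to be compatible with a root of $f$ modulo $qQ$, forcing one to fix in advance a coherent system $\{r_Q\}_{Q\geq 2}$ of roots of $f$---equivalently, a $p$-adic root of $f$ for each prime $p$, whose existence is exactly intersectivity (by CRT and compactness). Once this is arranged, the induction closes; the only changes from the proof of Theorem \ref{2} are quantitative, stemming from the use of Weyl-type estimates of degree $\deg f$ in place of the Gauss-sum estimates available for $n^2$, which weaken the quantitative bound but preserve the qualitative conclusion $D_f(N)/N\to 0$.
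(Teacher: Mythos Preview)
The paper does not actually prove Theorem~\ref{Thm3}; it is stated as a result of Kamae and Mend\`es France, and the paper only remarks that its own method ``can in fact be extended, using some (rather technical) additional results of Lucier'' with the details deferred elsewhere. So there is no proof here to compare your proposal against, only the paper's brief indication of what an extension would require.

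Your necessity direction is correct and is the standard argument.

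For sufficiency, your outline contains the right ingredients (coherent $p$-adic roots, auxiliary polynomials $g_q$, higher-degree Weyl estimates), but you have mischaracterised what the proof of Theorem~\ref{2} actually does. That proof is \emph{not} a density-increment argument passing to a subprogression: it never locates an AP on which $A$ has increased density. Instead it compares extremal densities at two scales directly, by forming $B=A'\cup(A'+q_\VE^2)$, showing $B$ has few square differences via the Fourier/Weyl estimate, and then invoking the Varnavides-type lower bound (Lemma~\ref{V}) to conclude $D(N)/N\le\tfrac34 D(M)/M$. To push this mechanism through for general intersective $f$ one needs three things: a shift $f(t_\VE)$ divisible by every small modulus (intersectivity supplies $t_\VE$ as a root of $f$ modulo $\lcm\{q\le\VE^{-2}\}$); Weyl-type minor-arc bounds for $\sum e(f(n)\A)$; and, crucially, an analogue of Lemma~\ref{V}. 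The last is where the difficulty concentrates, because for general $f$ the ``square-difference progressions'' $\{a+r^2,\dots,a+Mr^2\}$ have no direct analogue that preserves $f$-differences under rescaling---one is forced to pass to the auxiliary polynomials $g_q$ and track a coherent family of them, which is precisely the Lucier machinery the paper alludes to. Your sketch gestures at this via the $p$-adic compatibility condition, which is indeed the heart of the matter, but the route you describe (increment on an AP, rescale, iterate on $g_q$) is closer to the S\'ark\"ozy/Lucier density-increment strategy than to the Croot--Sisask argument actually used here.
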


The approach of Kamae and Mend\`es France in \cite{KMF} was indirect and gave no quantitative bounds for $D_f(N)$, and
while the methods of Pintz, Steiger and Szemer\'edi were later extended by Balog, Pelik\'an, Pintz and Szemer\'edi \cite{BPPS} to establish the quantitative bounds
\be\label{BPSS}
D_{n^k}(N)\leq C_kN/(\log N)^{c\log\log\log\log N}\ee
for any integer $k\geq 2$, the current best known upper bounds for general intersective polynomials $f\in\Z[n]$ are due to Lucier \cite{Lucier}, who showed that
\be\label{luc}D_f(N)\leq C_fN\left(\frac{(\log\log N)^\mu}{\log N}\right)^{1/(k-1)}\ee
where $k=\deg(f)$ and $\mu=3$ if $k=2$ and $\mu=2$ if $k\geq3$. However, bounds of the same quality as (\ref{BPSS}) have recently been obtained for general intersective \emph{quadratic} polynomials by Hamel, Lyall and Rice in \cite{HLR}.

The methods used to prove Theorem \ref{2} can in fact be extended, using some (rather technical) additional results of Lucier, to also establish Theorem \ref{Thm3}, these arguments will appear elsewhere. 

A result almost as general as Theorem \ref{Thm3}, namely that
(\ref{KM}) holds whenever $f$ is a polynomial in $\Z[n]$ with at least one integer root\footnote{ \ While it is clear that any polynomial $f$ in $\Z[n]$ with an integer root is plainly intersective, there do in fact exist polynomials with no rational roots that also have this property, for example $(n^3-19)(n^2+n+1)$.}, follows in a more straightforward manner using the same methods as those used in the proof of Theorem \ref{2}, see \cite{SS} (a preliminary version of this current paper) for a brief outline of how to extend the proof of Theorem \ref{2} in this direction. For the current best known upper bounds for this class of polynomials see \cite{LM1} and \cite{LM1'}.
%We note that this is truly less general than Theorem \ref{Thm3}, since while it is clear that any polynomial $f$ in $\Z[n]$ with an integer root is plainly intersective, there do in fact exist polynomials with no rational roots that also have this property, for example $(n^3-19)(n^2+n+1)$. 

In this note we shall focus exclusively on the case of square differences and proving Theorem \ref{2}. %For further discussion of more general polynomial differences, including the best known quantitative bounds and a brief outline of how to extend the proof of Theorem \ref{2} in the direction alluded to above, see \cite{SS} (a longer online version of this paper).

%\newpage

\section{Proof of Theorem \ref{2}}\label{ProofThm2}

Let $A\subseteq\{1,\dots,N\}$ with no square differences and $|A|=D(N)$.
Key to the argument we present is to construct, from this extremal set $A$, a new set $B\subseteq\{1,\dots,N\}$ with the following properties:
\begin{itemize}
\item[(i)] $|B|\geq\dfrac{5}{3}|A|$
\item[(ii)] \# of square differences in $B$ $\leq\dfrac{C_0}{\sqrt{\log N}}N^{3/2}$,
for some absolute constant $C_0>0$.
\end{itemize}

This construction, which will amount to defining $B$ to be $A\cup(A+t^2)$ for some appropriate (large) value of $t$, will be carried out in Section \ref{Construction} below.
Having constructed a set with such properties we will then establish Theorem \ref{2} by combining this with the following lower bound on the number of square differences contained in any given set $B\subseteq\{1,\dots,N\}$.

\begin{lem}\label{V}
Given any $B\subseteq\{1,\dots,N\}$ and $1\leq M\leq N$ 
\[\text{\# of square differences in $B$}\geq \left(\frac{|B|/N-(D(M)+2)/M}{M^{5/2}}\right) N^{3/2}.\]
\end{lem}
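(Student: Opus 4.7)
The approach is a double-counting argument via short arithmetic progressions inside $\{1,\dots,N\}$. For each integer $k\ge 1$ with $(M-1)k^2<N$, I would partition $\{1,\dots,N\}$ (up to a leftover of size $<k^2M$) into disjoint arithmetic progressions of common difference $k^2$ and length $M$; denote this partition by $\mathcal{P}_k$, consisting of at most $N/M$ tiles. The key structural observation is that for each tile $P=\{a+jk^2:0\le j\le M-1\}$, the affine bijection $a+jk^2\mapsto j+1$ onto $\{1,\dots,M\}$ preserves square differences (since $k^2$ is itself a square, the difference $(j_2-j_1)k^2$ is a square iff $j_2-j_1$ is a square). Consequently, whenever $|B\cap P|>D(M)$ the image in $\{1,\dots,M\}$ must contain a square difference, and hence so must $B\cap P$.

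Setting $X_k:=\#\{P\in\mathcal{P}_k:|B\cap P|>D(M)\}$, a standard averaging argument based on
\[
|B|-k^2M\ \le\ \sum_{P\in\mathcal{P}_k}|B\cap P|\ \le\ MX_k+D(M)\cdot\tfrac{N}{M}
\]
yields the per-$k$ inequality $X_k\ge |B|/M-k^2-D(M)N/M^2$. I would then sum over $k=1,\dots,K$ with $K=\lfloor\sqrt{N\eta_1/M}\rfloor$ and $\eta_1:=|B|/N-D(M)/M$, this being the largest value of $K$ for which the per-$k$ bound remains non-negative. Using $\sum_{k=1}^K k^2\le K^3/3+O(K^2)$ the dominant contribution produces
\[
\sum_{k=1}^K X_k\ \ge\ (c+o(1))\bigl(N\eta_1/M\bigr)^{3/2}
\]
for an explicit constant $c>0$.

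For the matching upper bound on $\sum_k X_k$ I would use that each square difference $(b_1,b_2)\in B^2$ with $b_2-b_1=t^2$ lies in at most one tile of any given $\mathcal{P}_k$ (tiles are disjoint), and in some tile only when $k\mid t$ and $t/k\le\sqrt{M-1}$. The set of admissible $k$'s is in bijection with the divisors $s=t/k$ of $t$ satisfying $s\le\sqrt{M-1}$, of which there are at most $\sqrt{M-1}\le\sqrt M$. Therefore
\[
\sum_{k=1}^K X_k\ \le\ \sqrt{M}\cdot(\text{\# square differences in }B),
\]
and combining the two bounds yields $\#\text{sq diffs}\ge c'\,N^{3/2}\eta_1^{3/2}/M^2$ for an explicit constant $c'>0$.

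Finally, writing $\eta:=|B|/N-(D(M)+2)/M=\eta_1-2/M$, to convert $\eta_1^{3/2}/M^2$ into the stated $\eta/M^{5/2}$ one needs $\eta_1^{3/2}\ge\eta\sqrt{M}$, which holds for all $\eta\ge 0$: if $\eta\ge 2/M$ use $\eta_1\ge\eta$ together with $\eta^{1/2}\ge\sqrt{2/M}\ge 1/\sqrt M$, while if $0\le\eta<2/M$ the bound $\eta_1\ge 2/M$ already gives $\eta_1^{3/2}\ge 2\sqrt2/M^{3/2}>\eta/\sqrt M$. I expect the main technical hurdle to be tracking the boundary losses $k^2M$ per partition and the constant factors in the averaging inequality carefully enough to recover the claimed constant of exactly $1$ in the lemma (as opposed to merely a slightly smaller absolute constant); this may require either a finer choice of $K$ or replacing the tilings by a more redundant but lossless covering by all overlapping APs $\{a,a+k^2,\dots,a+(M-1)k^2\}\subseteq\{1,\dots,N\}$.
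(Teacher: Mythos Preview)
Your strategy is the same Varnavides double-counting as the paper's, and the overcounting half of your argument (at most $\sqrt{M}$ admissible step sizes for a given square difference) is exactly the paper's. The difference is in the other half: you partition $\{1,\dots,N\}$ into \emph{disjoint} tiles for each $k$, whereas the paper works with the full \emph{overlapping} family $P_{a,r}=\{a+r^2,\dots,a+Mr^2\}$ over all $1\le a\le N-MR^2$ and $1\le r\le R:=\sqrt{N}/M$. With overlapping translates the double sum $\sum_{r,a}|B\cap P_{a,r}|$ is bounded below by $MR(|B|-2MR^2)$ and above by $M\cdot(\#\text{good})+D(M)RN$, giving immediately $\#\text{good}\ge(\eta)RN=\eta N^{3/2}/M$ with $\eta=|B|/N-(D(M)+2)/M$; the overcounting factor then becomes $M^{3/2}$ (at most $M$ translates $a$ for each of the $\le\sqrt{M}$ values of $r$), and $\eta N^{3/2}/M\cdot M^{-3/2}$ gives the stated bound on the nose, with no optimisation over a cutoff $K$.

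Two small remarks on your write-up. First, the displayed conversion inequality should read $\eta_1^{3/2}\ge \eta/\sqrt{M}$, not $\eta\sqrt{M}$; your case analysis actually proves the former. Second, your worry about the constant is somewhat pessimistic: your case split only yields $\eta_1^{3/2}\ge\sqrt{2}\,\eta/\sqrt{M}$, which combined with $c'\approx 2/3$ gives $2\sqrt{2}/3\approx 0.94$, but a direct optimisation (set $u=\eta M/2$) shows $\eta_1^{3/2}\sqrt{M}/\eta=\sqrt{2}(u+1)^{3/2}/u\ge 3\sqrt{6}/2$, so in fact $c'\eta_1^{3/2}\ge\sqrt{6}\,\eta/\sqrt{M}$ and the constant~$1$ is recovered with room to spare once $L=N\eta_1/M$ is moderately large. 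That said, the overlapping covering you propose in your final sentence is precisely the paper's route and is the cleanest way to get the exact statement.
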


The proof of this result is a straightforward exercise using ideas that where first exploited by Varnavides \cite{V} in the context of counting three term arithmetic progressions. While, in our context of counting square differences, this quantitative result can easily be deduced by adapting the proof of Theorem 3.1 in \cite{HL} (for example) we will,
for the sake of completeness, include a proof of Lemma \ref{V} in Section \ref{ProofV} below.

We should also note at this point that the standard application Varnavides' argument is to show that Theorem \ref{1}
is equivalent to the statement that for any $\D>0$ and $B\subseteq\{1,\dots,N\}$ with $|B|\geq\D N$ 
\[\text{\# of square differences in $B$}\geq  c(\D)N^{3/2},\] 
for some $c(\D)>0$.
In other words, provided $N$ is sufficiently large, $B$ will contain not only one square difference, but a positive proportion of all the square differences in $\{1,\dots,N\}$. This result clearly follows easily from  Lemma \ref{V}.
%Before moving to the construction of the set $B$ (we leave the proof of Lemma \ref{V} to the reader as a straightforward exercise) let us first see that together these facts imply Theorem \ref{2}.

%\begin{proof}[Proof of Theorem \ref{2}]
\subsection{Proof of Theorem \ref{2}}
It follows immediately from the upper bound on the number of square differences in $B$ given by property (ii) and the lower bound given by Lemma \ref{V}, that
\[\frac{|B|}{N}\leq \frac{D(M)}{M}+\frac{2}{M}+\frac{C_0M^{5/2}}{\sqrt{\log N}}.\]

Assuming that $N$ satisfies $C_0M^{7/2}\leq\sqrt{\log N}$, it follows that
\[
\frac{|B|}{N}\leq \frac{D(M)}{M}+\frac{3}{M}
\]
and hence, using the trivial lower bound $D(M)\geq\sqrt{M}-1$ (see Section \ref{a2}), that
\[\frac{|B|}{N}\leq\frac{5}{4}\frac{D(M)}{M}\]
provided that $M$ is sufficiently large (in fact $M\geq169$ is sufficient).
Combining this observation with the inequality
\[
\frac{|B|}{N}\geq\frac{5}{3}\frac{|A|}{N}=\frac{5}{3}\frac{D(N)}{N}
\]
which follows immediately from property (i) of our constructed set $B$, gives the desired inequality.\qed
%\end{proof}

\subsection{Construction of the set $B$}\label{Construction}
Given any set $B\subseteq\{1,\dots,N\}$, it is easy to see that
\be\label{3}
\text{\# of square differences in $B$}=\sum_{n=1}^{\sqrt{N}}\sum_{x\in\Z}B(x)B(x-n^2)
\ee
where $B(x)=1_B(x)$ denotes the indicator function of the set $B$.
Using the familiar orthogonality relation 
\[\int_0^1e^{2\pi i x \A}d\A=\begin{cases}
1\quad\text{if \ $x=0$}\\ 
0\quad\text{if \ $x\in\Z\setminus\{0\}$}
\end{cases}\]
we can, as is standard, express our count (\ref{3}) on the ``transform side'' as
\be\label{4}
\text{\# of square differences in $B$}=\int_0^1|\widehat{B}(\A)|^2\widehat{S}(\A)\,d\A
\ee
where
\[\widehat{B}(\A)=\sum_{x\in\Z}B(x)e^{-2\pi i x\A}\]
denotes the Fourier transform (on $\Z$) of the set $B$ and
\be
\widehat{S}(\A)=\sum_{n=1}^{\sqrt{N}}e^{-2\pi i n^2\A}
\ee
is the Fourier transform of the set of perfect squares contained in $\{1,\dots,N\}$.

Key to our proof (and essentially the only true ``machinary" used in the proof) is the following well-known estimate for the Weyl sum $\widehat{S}(\A)$, which states that the only possible obstruction to cancellation in this exponential sum arises if $\A$ is ``close'' to a rational with ``small'' denominator. 
\begin{propn}\label{Weyl Estimates}
Let $\VE>0$ and 
\[
\mathbf{M}_{a/q}(\VE)=\left\{\A\in[0,1]\,:\,\Bigl|\A-\frac{a}{q}\Bigr|\leq\frac{1}{\VE^{2} N}\right\}.
\]
If $\A\notin\mathbf{M}_{a/q}(\VE)$ for any $(a,q)=1$ with $1\leq q\leq\VE^{-2}$, then
\[
%|\widehat{S}(\A)|\leq \VE N^{1/2}+N^{39/80}
|\widehat{S}(\A)|\leq 5\VE N^{1/2}\]
provided $N$ is sufficiently large with respect to $\VE$, in particular $N\geq C\VE^{-50}$ would be sufficient.
\end{propn}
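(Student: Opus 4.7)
The plan is to apply the classical Weyl differencing trick to the quadratic exponential sum $\widehat{S}(\A)$, reducing it to linear exponential sums that are controlled via Dirichlet's approximation theorem and the hypothesis that $\A$ avoids every major arc.

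Writing $L=\lfloor\sqrt{N}\rfloor$, squaring, and substituting $m=n+h$ gives
\[
|\widehat{S}(\A)|^2\leq L+2\sum_{h=1}^{L-1}\Bigl|\sum_{m=1}^{L-h}e^{-2\pi i\A(2hm+h^2)}\Bigr|,
\]
and the inner sum, being geometric in $m$, has modulus at most $\min(L,1/(2\|2h\A\|))$. I would then apply Dirichlet's approximation theorem with parameter $\sqrt{N}$, obtaining a reduced fraction $a/q$ with $1\leq q\leq\sqrt{N}$ and $|\A-a/q|\leq 1/(q\sqrt{N})$, and split on the size of $q$.

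\emph{Case 1: $q>\VE^{-2}$.} The classical Vinogradov-type bound
\[
\sum_{h=1}^{H}\min\!\Bigl(L,\frac{1}{\|h\A\|}\Bigr)\ll\Bigl(\frac{H}{q}+1\Bigr)(L+q\log q),
\]
applied with $H=2L$, yields $|\widehat{S}(\A)|^2\ll N/q+\sqrt{N}\log N+q\log q$. Since $\VE^{-2}<q\leq\sqrt{N}$, each term is $\ll\VE^2 N$ once $N\geq C\VE^{-50}$.

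\emph{Case 2: $q\leq\VE^{-2}$.} The hypothesis, applied to this particular reduced fraction, forces $|\B|:=|\A-a/q|>1/(\VE^2 N)$. Rather than difference, I would decompose the original Weyl sum by residue class modulo $q$:
\[
\widehat{S}(\A)=\sum_{r=1}^{q}e^{-2\pi iar^2/q}\sum_{s}e^{-2\pi i\B(qs+r)^2},
\]
where the inner sum runs over integers $s$ with $1\leq qs+r\leq L$. This inner sum has quadratic phase with second derivative $2|\B|q^2$, so van der Corput's second-derivative test bounds it by $O(\sqrt{N|\B|}+1/(q\sqrt{|\B|}))$. Summing trivially over the $q$ outer terms gives
\[
|\widehat{S}(\A)|\ll q\sqrt{N|\B|}+\frac{1}{\sqrt{|\B|}}\leq\sqrt{q}\,N^{1/4}+\VE\sqrt{N},
\]
using $|\B|\leq 1/(q\sqrt{N})$ on the first term and $|\B|>1/(\VE^2 N)$ on the second. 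Since $q\leq\VE^{-2}$, the right side is at most $N^{1/4}/\VE+\VE\sqrt{N}\leq 5\VE\sqrt{N}$ whenever $N\geq C\VE^{-50}$.

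The main technical obstacle is Case 2: once Weyl differencing is set aside, one must estimate $\widehat{S}(\A)$ directly, and the crude outer bound $|\sum_r|\leq q$ wastes the Gauss-sum cancellation of magnitude $\sqrt{q}$ (recoverable via Poisson summation). This waste is comfortably absorbed by the large exponent in the hypothesis $N\geq C\VE^{-50}$, which is engineered to leave ample slack for all such logarithmic and polynomial losses.
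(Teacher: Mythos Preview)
Your argument is correct and follows the same overall architecture as the paper: apply Dirichlet, split on the size of the denominator, treat large $q$ via Weyl differencing, and treat small $q$ by writing $\widehat{S}(\A)$ as a sum over residue classes modulo $q$ with slowly varying phase $\B(qs+r)^2$. The differences are in the packaging of the small-$q$ case.

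The paper places its cutoff at $q=N^{1/20}$ rather than $q=\VE^{-2}$, and on the resulting major arcs it proves the sharper pointwise bound
\[
|\widehat{S}(\A)|\leq 5\sqrt{N}\,q^{-1/2}\bigl(1+N|\A-a/q|\bigr)^{-1/2}
\]
by first approximating the inner sum over $s$ by the integral $I_N(\B)=\int_0^1 e^{-2\pi i N\B x^2}\,dx$ (with error $O(N^{1/10})$) and then bounding the Gauss sum $S(a,q)$ by $\sqrt{2q}$ and $I_N(\B)$ by $2\sqrt{2}(1+N|\B|)^{-1/2}$. From this estimate the conclusion of the Proposition follows immediately, since outside $\mathbf{M}_{a/q}(\VE)$ either $q>\VE^{-2}$ or $N|\B|>\VE^{-2}$. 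Your route bypasses both the Gauss-sum cancellation and the integral approximation by hitting the inner sum directly with van der Corput's second-derivative test; as you note, this wastes a factor of $\sqrt{q}$, but the hypothesis $N\geq C\VE^{-50}$ swallows that loss easily. The advantage of the paper's detour is that the intermediate estimate (its Lemma~\ref{major}) is of independent interest and is reusable in density-increment arguments; the advantage of yours is that it reaches the stated Proposition with less overhead.
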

\comment{
\begin{Rem}[on ``big O notation''] Whenever we write
$E=O(F)$ for any two quantities $E$ and $F$ we shall mean that $|E|\leq CF$, for some constant $C>0$.
\end{Rem}}

We are now ready to define our set $B$. Recalling that $A\subseteq\{1,\dots,N\}$ is an extremal set with no square differences, we define (for a value of $\VE>0$ to de determined)
\be
B:=A'\cup(A'+q_\VE^2)
\ee
where $q_\VE=\lcm\{1\leq q \leq \VE^{-2}\}$ and $A'=A\cap\{1,\dots,N-q_\VE^2\}$.

Using the fact that $|A|=D(N)\geq\sqrt{N}-1$ it follows that $|A'|\geq5|A|/6$, and consequently also that property (i) for our set $B$ will hold, provided $\VE>0$ is chosen large enough for
\be\label{7}
q_\VE^2\ll \sqrt{N}.
\ee
In order to see what actual restriction this places on our choice of $\VE>0$, we recall, as one can verify using only elementary properties of the prime numbers, that \[\exp(\VE^{-2}/2)\leq q_\VE\leq\exp(\VE^{-2})\] and hence that inequality (\ref{7}) will hold whenever \[\VE^{-2}\ll \log N.\] 

\begin{Rem}[on ``$\ll$ notation''] Whenever we write
$E\ll F$ for any two quantities $E$ and $F$ we shall mean that $E\leq cF$, for some some sufficiently small constant $c>0$.
\end{Rem}

We therefore now fix
\be
\VE:=C_1(\log N)^{-1/2}
\ee 
with $C_1>0$ a sufficiently large (but absolute) constant.
In order to to establish that our set $B$ also satisfies property (ii) it will suffice to show, for this choice of $\VE>0$, that
\be\label{9}
\text{\# of square differences in $B$}\leq 20\,\VE N^{3/2}
\ee 
for all sufficiently large $N$.

To establish (\ref{9}) we first note that since $A'\subseteq A$ contains no square differences, it follows that 
\[B(x)=A'(x)+A'(x-q_\VE^2)\]
since $A'$ and $A'+q_\VE^2$ are disjoint, and hence, using the familiar and easily verified property that Fourier transformation takes translations to modulations, that
\[
\widehat{B}(\A)=\widehat{A'}(\A)(1+e^{-2\pi i q_\VE^2\A}).
\]
Multiplying this expression for $\widehat{B}(\A)$ by its complex conjugate, we see that
\[
\int_0^1|\widehat{B}(\A)|^2\widehat{S}(\A)\,d\A=2\int_0^1|\widehat{A'}(\A)|^2(\cos(2\pi q_\VE^2\A)+1)\widehat{S}(\A)\,d\A.
\]

In light of (\ref{4}), and the fact that $A'$ contains no square differences, it follows that
\[\int_0^1|\widehat{A'}(\A)|^2\widehat{S}(\A)\,d\A=0\]
and hence that
\begin{align*}
\text{\# of square differences in $B$}&=2\int_0^1|\widehat{A'}(\A)|^2(\cos(2\pi q_\VE^2\A)-1)\widehat{S}(\A)\,d\A\\
&\leq 2\int_0^1|\widehat{A'}(\A)|^2\underbrace{|\cos(2\pi q_\VE^2\A)-1||\widehat{S}(\A)|}_{(\star)}\,d\A.
\end{align*}

A crucial observation at this point, which completes the proof of inequality (\ref{9}), is the fact that
\be\label{crucial}
(\star)\leq 10\,\VE \sqrt{N}
\ee 
uniformly in $\A$. It then follows that
\[
\text{\# of square differences in $B$}\leq 20\,\VE \sqrt{N}\int_0^1|\widehat{A'}(\A)|^2\,d\A\leq 20\,\VE N^{3/2}
\]
where to establish the final inequality we have invoked the Plancherel identity, namely
\[\int_0^1|\widehat{A'}(\A)|^2\,d\A=\sum_{x\in\Z}|A'(x)|^2\]
whose validity in this setting can be easily verified (using orthogonality), together with the simple observation that 
\[\sum_{x\in\Z}|A'(x)|^2=|A'|\leq N.\]

It remains to verify the uniform estimate (\ref{crucial}). Since $|\cos(2\pi q_\VE^2\A)-1|\leq 2$ for all $\A\in [0,1]$, it follows from Proposition \ref{Weyl Estimates} that (\ref{crucial}) will hold whenever $\A\notin\mathbf{M}_{a/q}(\VE)$ for any $(a,q)=1$ with $1\leq q\leq\VE^{-2}$, since $N=\exp(C_1^2\VE^{-2})\gg \VE^{-50}$. While if $\A\in\mathbf{M}_{a/q}(\VE)$ for some $(a,q)=1$ with $1\leq q\leq\VE^{-2}$, then by definition we know that
$|\A-a/q|\leq\VE^{-2} N^{-1}$.
Moreover, since $q|q_\VE^2$ (by the definition of $q_\VE$) it follows that
\[\cos\left(2\pi q_\VE^2\A\right)=\cos\left(2\pi q_\VE^2\left(\A-a/q\right)\right)\]
and hence, by the Mean Value Theorem, we see that
\begin{align*}
|\cos(2\pi q_\VE^2\A)-1|&=|\cos(2\pi q_\VE^2\left(\A-a/q\right))-1|\\
&\leq2\pi q_\VE^2|\A-a/q|\\
&\leq 2\pi q_\VE^2\VE^{-2} N^{-1}.
\end{align*}
The result then follows, provided the constant $C_1$ in our choice of fixed $\VE>0$ is chosen sufficiently large, since
\[2\pi q_\VE^2\VE^{-2} N^{-1}\leq\VE\]
whenever $\VE^{-2}\ll\log N$ (again) and we trivially know that $|\widehat{S}(\A)|\leq\sqrt{N}$ for all $\A\in[0,1]$.
\qed

This completes the proof of Theorem \ref{2} modulo Lemma \ref{V} and Proposition \ref{Weyl Estimates}. The proof of these two results are given in Section \ref{ProofVandW} below.
%\newpage

\section{Proof of Lemma \ref{V} and Proposition \ref{Weyl Estimates}}\label{ProofVandW}

\subsection{Proof of Lemma \ref{V}}\label{ProofV}

Let $B\subseteq\{1,\dots,N\}$ and $1\leq M\leq N$.
We proceed by covering \{1,\dots,N\} by the collection of all square-difference progressions of length $M$ of the form
\[P_{a,r}=\{a+r^2,\dots,a+Mr^2\}\]
with $1\leq r\leq R:=\sqrt{N}/M$ and $1\leq a\leq N-MR^2$. 
We will say that such a progression $P_{a,r}$ is \emph{good} if
\[|B\cap P_{a,r}|\geq D(M)+1\]
since, by virtue of the fact that square differences are preserved under translations and dilations by a perfect square, each such progression clearly contributes at least one square difference in $B$.

A simple counting argument, which we give below, shows that 
\be\label{count}\text{\# of \emph{good} progressions $P_{a,r}$}\geq\left(\frac{|B|}{N}-\frac{D(M)+2}{M}\right) RN.\ee
Now while, as noted above, each of these good progressions contributes at least one square difference in $B$, it is of course also the case that some of these square differences could be getting over counted. However, as we shall also see below, each square difference in $B$ is being over 
counted at most $M^{3/2}$ times, from which it follows that
\[\text{\# of square differences in $B$}\geq \left(\frac{|B|/N-(D(M)+2)/M}{M^{5/2}}\right) N^{3/2}\]
as required.
We are thus left with the straightforward tasks of verifying (\ref{count}) and the claim that the each square difference in $B$ is being over 
counted in this argument at most $M^{3/2}$ times. 

We will address the over counting argument first.
Suppose we are 
given a pair $\{b,b+n^2\}$ in $B$. If this pair is contained in $P_{a,r}$, then $r$ must be a divisor of $n$ and moreover $n^2\leq Mr^2$. It therefore follows that there are at most $\sqrt{M}$ choices for $r$ and it is easy to see that each choice of $r$ fixes $a$ in at most $M$ ways, thus each square difference is  indeed over counted at most $M^{3/2}$ times. 

Finally, we verify (\ref{count}). By combining the upper bound
\[
\sum_{r=1}^R\sum_{a=1}^{N-MR^2}|B\cap P_{a,r}|\leq \!\!\!\!\sum_{\substack{a,r \\ \text{good }P_{a,r}}}\!\!\!\!M\,\,\,+\!\!\!\!\!\!\!\!\sum_{\substack{a,r \\ \text{\underline{not} good }P_{a,r}}}\!\!\!\!\!\!\!\!D(M)\leq (\text{\# of \emph{good} progressions $P_{a,r}$})\,M + D(M)RN
\]
with the lower bound
\[
\sum_{r=1} ^R\sum_{a=1}^{N-MR^2}|B\cap P_{a,r}|\geq M\sum_{r=1} ^R |B\cap\{Mr^2,\dots, N-Mr^2\}|\geq MR\left(|B|-2MR^2\right)
\]
it follows that
\[\text{\# of \emph{good} progressions $P_{a,r}$}\geq \left(\frac{|B|}{N}-\frac{2MR^2}{N}-\frac{D(M)}{M}\right) RN\]
from which (\ref{count}) follows.\qed

%\newpage

\subsection{Proof of Proposition \ref{Weyl Estimates}}

We first recall Dirichlet's (pigeonhole) principle: 

\begin{center}
\emph{Given any $\A\in\R$ and $Q\in\N$, there exist $(a,q)=1$ with $1\leq q\leq Q$ such that} 
\[\Bigl|\A-\frac{a}{q}\Bigr|\leq\frac{1}{qQ}\leq\min\Bigl\{\frac{1}{q^2},\frac{1}{Q}\Bigr\}.\]
\end{center}

The proof of the following key result is completely standard, see for example \cite{TenLectures} or \cite{gowersnotes}.

\begin{propn}[The Weyl inequality]\label{Weyl}

If $|\A-a/q|\leq q^{-2}$ and $(a,q)=1$, then
\[|\widehat{S}(\A)|\leq 40\sqrt{N}\log N(1/q+1/\sqrt{N}+q/N)^{1/2}.\]
\end{propn}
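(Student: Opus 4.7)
The plan is the classical Weyl differencing argument. Writing $M=\sqrt{N}$ so that $\widehat{S}(\A)=\sum_{n=1}^{M}e(-n^2\A)$, where $e(x)=e^{2\pi i x}$, I would first square and expand:
\[
|\widehat{S}(\A)|^2=\sum_{n_1,n_2=1}^{M}e(-(n_1^2-n_2^2)\A).
\]
Making the substitution $n_1=n_2+h$ (so $n_1^2-n_2^2=2n_2 h+h^2$) and swapping the order of summation gives
\[
|\widehat{S}(\A)|^2=\sum_{|h|<M}e(-h^2\A)\sum_{n\in I_h}e(-2hn\A),
\]
where $I_h$ is a subinterval of $\{1,\dots,M\}$ of length at most $M$. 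The $h=0$ diagonal contributes $M$, and for each $h\ne 0$ the inner sum is a geometric progression with common ratio $e(-2h\A)$, hence bounded by $\min(M,1/(2\|2h\A\|))$, where $\|\cdot\|$ denotes distance to the nearest integer. Triangle inequality and a change of variable $m=2h$ then yield an estimate of the form
\[
|\widehat{S}(\A)|^2\ \leq\ M+\sum_{m=1}^{2M}\min\!\Bigl(M,\frac{1}{\|m\A\|}\Bigr),
\]
modulo harmless absolute constants.

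The main obstacle is the arithmetic lemma that controls $\sum_{m=1}^{K}\min(M,1/\|m\A\|)$ when $\A$ satisfies a Dirichlet approximation $|\A-a/q|\leq 1/q^2$ with $(a,q)=1$. The standard approach is to partition $\{1,\dots,K\}$ into roughly $K/q+1$ consecutive blocks of length $q$. Inside any such block, if $m=m_0+j$ with $0\leq j<q$, then $\|m\A\|$ differs from $\|ma/q\|$ by at most $K/q^2\cdot(1/q)\cdot q\leq 1/q$ or so, and since $(a,q)=1$ the residues $ma\pmod q$ as $j$ varies are a permutation of $0,1,\dots,q-1$. Consequently the values $\|m\A\|$ in each block are, up to small perturbation, a rearrangement of $0,1/q,2/q,\dots,(q-1)/q$, and summing $\min(M,q/j)$ over $j=1,\dots,q-1$ gives $O(M+q\log q)$ per block. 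Multiplying by the number of blocks produces the total bound $O\bigl((K/q+1)(M+q\log q)\bigr)$.

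To finish, I would insert $K=2M$ and $M=\sqrt{N}$ into the arithmetic lemma to obtain
\[
|\widehat{S}(\A)|^2\ \lesssim\ \Bigl(\frac{M}{q}+1\Bigr)(M+q\log q)\ \lesssim\ N\log N\left(\frac{1}{q}+\frac{1}{\sqrt{N}}+\frac{q}{N}\right),
\]
after absorbing the initial $+M$ term and using $\log q\leq\log N$. Taking square roots and tracking the absolute constants carefully (which accounts for the explicit factor $40$ in the statement) yields
\[
|\widehat{S}(\A)|\ \leq\ 40\sqrt{N}\log N\Bigl(\frac{1}{q}+\frac{1}{\sqrt{N}}+\frac{q}{N}\Bigr)^{1/2},
\]
as required. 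The Weyl-differencing step and the final assembly are essentially mechanical; the genuine technical heart is the bookkeeping in the arithmetic lemma, where one must properly account both for the perturbation between $\|m\A\|$ and $\|ma/q\|$ and for the equidistribution of $ma\pmod q$ within each block.
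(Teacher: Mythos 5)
The paper offers no proof of this proposition: it simply declares the Weyl inequality ``completely standard'' and points to Montgomery's \emph{Ten Lectures} and Gowers's notes. Your sketch is precisely the argument those references give --- Weyl differencing of the square, reduction to $\sum_{m\leq 2\sqrt{N}}\min(\sqrt{N},\|m\A\|^{-1})$, and the block-of-length-$q$ lemma exploiting the Dirichlet approximation --- so it is the right proof and fills the gap the paper leaves to the literature. One caution for when you write it out in full: your statement that $\|m\A\|$ differs from $\|ma/q\|$ by at most $1/q$ is not correct globally, since the perturbation $m|\A-a/q|$ can be as large as $K/q^{2}\gg 1/q$ when $K>q$; the correct formulation compares $m\A$ within a single block to the fixed offset $m_0\A$ at the block's start (where the drift really is at most $q|\A-a/q|\leq 1/q$), which is what the rest of your sketch implicitly uses and what yields the $O(M+q\log q)$ bound per block, with the $O(M)$ term absorbing the $O(1)$ values of $j$ in each block for which $\|m\A\|$ is small.
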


We note (by Dirichlet's principle) that for any given $\A\in\R$ and $Q\in\N$, there always exist $(a,q)=1$ with $1\leq q\leq Q$ that satisfies the hypothesis of the Weyl inequality. Moreover, it is easy to see that this inequality gives a non-trivial conclusion whenever $N^{\mu}\leq q\leq N^{1-\mu}$ for some $0<\mu<1/2$. 
For the purposes of this exposition we shall take $Q=N^{1-\mu}$ with $\mu=1/20$ and define
\[
\mathbf{M}_{a/q}'=\left\{\A\in [0,1]\,:\,\Bigl|\A-\frac{a}{q}\Bigr|\leq \frac{1}{N^{19/20}} \right\}.
\]

It is customary to say that $\A$ is in a \emph{major arc} if $\A\in \mathbf{M}_{a/q}'$ for some $(a,q)=1$ with $1\leq q \leq N^{1/20}$, and call the complement of these major arcs, the \emph{minor arcs}.
If $\A$ is in one of these minor arcs, then it follows from Dirichlet's principle that there must exist a reduced fraction $a/q$ with $N^{1/20}\leq q\leq N^{19/20}$ such that $|\A-a/q|\leq q^{-2}$, and hence, by the Weyl inequality, that
\[|\widehat{S}(\A)|\leq 80 N^{19/40}\log N\leq \VE \sqrt{N}\]
for any $\VE>0$ that satisfies $N\gg\VE^{-50}$. %This accounts for one of the $\VE\sqrt{N}$ terms in Proposition \ref{Weyl Estimates}.

In order to obtain the full conclusion of Proposition \ref{Weyl Estimates}, which is valid on a subset of $[0,1]$ which is strictly larger than the collection of classical minor arcs defined above, we must perform a careful analysis of the behavior our exponential sum $\widehat{S}(\A)$ on the major arcs. In particular, we will invoke the following.

\begin{lem}[Major arc estimate]\label{major}
If $\A\in \mathbf{M}_{a/q}'$ for some $(a,q)=1$ with $1\leq q\leq N^{1/20}$, then
\[
|\widehat{S}(\A)|\leq 5\sqrt{N}\,q^{-1/2}(1+N|\A-a/q|)^{-1/2}.
\]
\end{lem}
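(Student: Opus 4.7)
The plan is to write $\alpha = a/q + \beta$ with $|\beta| \leq N^{-19/20}$ and exploit periodicity modulo $q$ to split $\widehat{S}(\alpha)$ into a Gauss sum times a smooth oscillatory integral. First I would decompose each $n$ in the summation range as $n = mq + r$ with $1 \leq r \leq q$ and integer $m \geq 0$. Since $(mq+r)^2 a \equiv r^2 a \pmod q$, the arithmetic factor $e^{-2\pi i n^2 a/q}$ depends only on the residue class $r$, and one obtains the factorization
\begin{equation*}
\widehat{S}(\alpha) = \sum_{r=1}^{q} e^{-2\pi i a r^2/q}\, T_r(\beta), \qquad T_r(\beta) := \sum_{\substack{m \geq 0 \\ mq+r \leq \sqrt{N}}} e^{-2\pi i (mq+r)^2 \beta}.
\end{equation*}

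Second, I would replace each inner sum $T_r(\beta)$ by the Riemann integral $q^{-1}\int_0^{\sqrt{N}} e^{-2\pi i t^2 \beta}\,dt$, with the error controlled by the total variation of $t\mapsto e^{-2\pi i t^2\beta}$ on $[0,\sqrt{N}]$, which is $O(1 + N|\beta|)$. Summed over $r$ this error is $O(q(1+N|\beta|))$, which is harmless in our regime since $q \leq N^{1/20}$ and $N|\beta|\leq N^{1/20}$.

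Third, I would estimate the resulting oscillatory integral by a Fresnel-type change of variables $u=t\sqrt{|\beta|}$, which gives the standard bound
\begin{equation*}
\left|\int_0^{\sqrt{N}} e^{-2\pi i t^2 \beta}\,dt\right| \leq C\min\bigl(\sqrt{N},\,|\beta|^{-1/2}\bigr) \leq \frac{C\sqrt{N}}{\sqrt{1+N|\beta|}}.
\end{equation*}
Together with the classical Gauss-sum estimate $\bigl|\sum_{r=1}^q e^{-2\pi i a r^2/q}\bigr|\leq \sqrt{2q}$, valid since $(a,q)=1$, this yields the main term
\begin{equation*}
\frac{\sqrt{2q}}{q}\cdot \frac{C\sqrt{N}}{\sqrt{1+N|\beta|}} = \frac{C'\sqrt{N}}{\sqrt{q}\sqrt{1+N|\beta|}},
\end{equation*}
and then one absorbs the $O(q(1+N|\beta|))$ error into the main term using the constraints $q\leq N^{1/20}$ and $|\beta|\leq N^{-19/20}$, choosing $N$ large enough to force the overall constant down to $5$.

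The main obstacle is making all constants explicit: tracking the constant in the Fresnel-integral bound, using the sharp Gauss-sum estimate, and arguing that the summation-to-integral error is dominated by the main term throughout the major-arc range. Once these are handled the statement follows by straightforward bookkeeping.
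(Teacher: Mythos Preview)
Your proposal is correct and follows essentially the same route as the paper: both write $\alpha=a/q+\beta$, split the sum via $n=mq+r$ into a Gauss sum times an oscillatory sum, replace the latter by an integral with error $O(N^{1/10})$, and then invoke the standard Gauss-sum bound $|S(a,q)|\leq\sqrt{2q}$ together with the Fresnel estimate $\bigl|\int e^{-2\pi i t^2\beta}\,dt\bigr|\ll\min(\sqrt{N},|\beta|^{-1/2})$. The only cosmetic difference is that the paper first replaces $(mq+r)^2\beta$ by $(mq)^2\beta$ before passing to the integral, whereas you compare the sum to $\int_0^{\sqrt N}e^{-2\pi i t^2\beta}\,dt$ directly; the resulting error terms are of the same order.
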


It now follows immediately from this Lemma that for any given $\VE>0$ and $\A\in\mathbf{M}_{a/q}'$, our exponential sum will satisfy
\[|\widehat{S}(\A)|\leq 5\VE \sqrt{N}\]
provided $(a,q)=1$ and either 
$\VE^{-2}\leq q\leq N^{1/20}$
or 
$\VE^{-2}N^{-1}\leq |\A-a/q|\leq N^{-19/20},$ as required.\qed

The proof of Lemma \ref{major} is standard, but for the sake of completeness we have chosen to included a proof in Appendix \ref{A3} below. 
%Before doing this, we first present our justification of the trivial bounds for $D(N)$ that were purported at the beginning of the introduction and conclude our brief discussion of more general polynomial differences that was initiated at the end of the introduction.

\appendix

\comment{
\section{Other Polynomial Differences}\label{OPD}

Recall that for a given $P\in\Z[n]$, we define $D(P,N)$ to denote the maximum size of a subset of $\{1,\dots,N\}$ that contains no two distinct elements whose difference is given by $P(n)$ for some $n\in\Z$.
The purpose of this section is to outline how one may extend the proof of Theorem \ref{2} to give a new proof of the following special case of Theorem \ref{Thm3}. 

\begin{thm}[Kamae and Mend\`es France \cite{KMF}]\label{D(P,N)} 
If $P\in\Z[n]$ with at least one integer root, then
\[\lim_{N\rightarrow\infty}\frac{D(P,N)}{N}=0.\] 
\end{thm}
In order to do this we will make use of the observation, which originates (in a more general form) from \cite{LM1} (see also \cite{LM3}), that it suffices to consider the analogous problem, for monomial curves, in higher dimensions. 

Before stating
 this observation more precisely (Lemma \ref{lift} below)
we introduce some new notation. 
Let
\[Q_N=\{1,N^{2/(k+1)}\}\times\{1,N^{4/(k+1)}\}\times\cdots\times\{1,N^{2k/(k+1)}\}\subseteq\Z^k\]
noting that $|Q_N|=N^k$, and $D_k(N)$ denote the maximum size of a subset of $Q_N$ that contains no monomial difference, that is no distinct elements whose difference is given by $\gamma(n)=(n,n^2,\dots,n^k)$ for some $n\in\Z$.

\begin{lem}[Lyall and Magyar \cite{LM1}]\label{lift}
If $P\in\Z[n]$ with degree $k\geq2$ and at least one integer root, then
\[\frac{D(P,N)}{N}\leq C_P\,\frac{D_k(N)}{N^k}.\] 
\end{lem}

In light of Lemma \ref{lift}, whose proof we include in Section \ref{ProofLift} below, Theorem \ref{D(P,N)} will be an immediate consequence of the following higher dimensional analogue of S\'ark\"ozy's theorem (Theorem \ref{1}).

\begin{thm}[Lyall and Magyar \cite{LM1}]\label{D_k(N)} 
\[\lim_{N\rightarrow\infty}\frac{D_k(N)}{N^k}=0.\] 
\end{thm}

The methodology developed to prove Theorem \ref{2} extends in a natural way to give a proof, which we choose to only sketch in Section \ref{ProofThm6} below, of the following result (the analogue of Theorem \ref{2} in this context) and hence a new proof of Theorems \ref{D_k(N)} and \ref{D(P,N)} (albeit with weak quantitative bounds).

\begin{thm}\label{6} Let $M,N\in\N$, then
\[\frac{D_k(N)}{N^k}\leq\frac{3}{4}\frac{D_k(M)}{M^k}\]
provided $N\geq \exp(CM^{3k+4/(k+1)})^k$, for some constant $C>0$, and $M$ is sufficiently large.
\end{thm}

%Giving us a new proof of Theorem \ref{D_k(N)} and hence also Theorem \ref{D(P,N)} (albeit with weak quantitative bounds).

\subsubsection*{Remark on quantitative bounds}
While the methods of Pintz, Steiger and Szemer\'edi were extended by Balog, Pelik\'an, Pintz and Szemer\'edi \cite{BPPS} to show that
\[D(n^k,N)\leq C_kN/(\log N)^{c\log\log\log\log N},\]
the current best known upper bounds for general polynomials in $\Z[n]$ are due to Lucier \cite{Lucier}, who showed that
\be\label{luc}D(P,N)\leq C_PN\left(\frac{(\log\log N)^\mu}{\log N}\right)^{1/(k-1)}\ee
whenever $P$ has a root modulo $m$ for every $m\geq2$, where $k=\deg(P)$ and $\mu=3$ if $k=2$ and $\mu=2$ if $k\geq3$.

In  \cite{LM1'}  the author and Magyar show that in the special case of polynomials in $\Z[n]$ of degree $k$ with at least one integer root, one can in fact take $\mu=1$ in (\ref{luc}) for all $k\geq2$. This was achieved by first establishing  
\be\label{best}D_k(N)\leq C_kN^k\left(\frac{\log\log N}{\log N}\right)^{1/(k-1)}\ee
for some absolute constant $C_k>0$, and then invoking Lemma \ref{lift}.

\comment{
As a corollary of this result, see \cite{LM3}, it in fact follows that
\[\frac{D(P_1,\dots,P_\ell,N)}{N}\leq C_{P_1,\dots,P_\ell}\left(\frac{\log\log N}{\log N}\right)^{1/\ell(k-1)}\]
where, for a given collection of polynomials $P_1,\dots,P_\ell\in\Z[n]$ with at least one integer root and maximum degree at most $k$, $D(P_1,\dots,P_\ell,N)$ denote the maximum size of a subset of $\{1,\dots,N\}$ that contains no $\ell$ pairs of distinct elements with differences given by $P_1(n),\dots,P_\ell(n)$ simultaneously for some $n\in\Z$. 
}

%\newpage

\subsection{Proof of Lemma \ref{lift}}\label{ProofLift}
Let $P\in\Z[n]$ of degree $k\geq 2$ with at least one integer root and $A\subseteq\{1,\dots,N\}$ with no two distinct elements whose difference is given by $P(n)$ for some $n\in\Z$. By relabeling, we may assume, without loss in generality, that our polynomial has a root at zero and that $P(n)=c_kn^k+\cdots+c_1n$.

Let $\mathcal{P}:\Z^k\rightarrow\Z$ denote the mapping given by
\[\mathcal{P}(b)=c_1b_1+\cdots+c_kb_k.\]
It follows from the pigeonhole principle that
\[\bigl|\mathcal{P}(\Z^k)\cap(A-m)\bigr|\geq |A|/\gcd(c_1,\dots,c_k)\]
for some $1\leq m\leq \gcd(c_1,\dots,c_k)$.
Thus, if we choose $N'$ to be a sufficiently large multiple of $N^{2k/(k+1)}$  %(depending only the polynomial's coefficients) 
and define
\[B'=\left\{b\in\{-N',\dots,N'\}^k\,:\,\mathcal{P}(b)\in A-m\right\},\]
it follows that 
\[\frac{|B'|}{(2N'+1)^k}\geq c\,\frac{|A|}{N}\]
for some small, but absolute constant $c>0$ depending only on the polynomial $P$. By partitioning $\{-N',\dots,N'\}^k$ using translates of $Q_N$, it then follows, again by the pigeonhole principle, that there must exist $B\subseteq Q_N$, a translate of some subset of $B'$, with the property that
\[\frac{|B|}{N^k}\geq c\,\frac{|A|}{N}.\]

The key observation now is that since $\mathcal{P}(B')\subseteq A-m$ and $(A-A)\cap P(\Z)=\emptyset$ (by assumption), it follows that $(B'-B')\cap\gamma(n)=\emptyset$, and hence also that $(B-B)\cap\gamma(n)=\emptyset$, for all $n\in\Z$. It therefore follows that
\[\frac{D(P,N)}{N}\leq \frac{1}{c}\frac{D_k(N)}{N^k}\]
as required.\qed

%\newpage

\subsection{Sketch proof of Theorem \ref{6}}\label{ProofThm6}
%We include only the briefest of outlines, leaving the details for the interested reader to fill in.
As with the proof of Theorem \ref{2} we begin with an extremal set $A\subseteq Q_N$ that contains no monomial differences and proceed to define, for a value $\VE>0$ fixed to be a sufficiently large multiple of $(\log N)^{-1/k}$, a new set
\[B=A'\cup(A'+\gamma(q_\VE))\]
where $q_\VE=\lcm\{1\leq q \leq \VE^{-k}\}\ll N^{1/(k+1)}$  and 
$A'=A\cap\{1,N^{2/(k+1)}-q_\VE\}\times\cdots\times\{1,N^{2k/(k+1)}-q_\VE^k\}.$

It is easy to then see that this set $B$ has the property that $|B|\geq 5|A|/3$ and, by mimicking the arguments in Section \ref{Construction}, but using Fourier analysis on $\Z^k$ and invoking Lemma 5 from \cite{LM1} (the analogue of Proposition \ref{Weyl Estimates} in this context), one can also show that
\[\text{\# of monomial differences in $B$ $\leq\dfrac{C_0}{(\log N)^{1/k}}N^{k+2/(k+1)}$}\]
for some absolute constant $C_0>0$.

Theorem \ref{6} then follows, as in the proof of Theorem \ref{2}, by
combining these two properties of set $B$ with the following lower bound on the number of monomial differences in any given set $B\subseteq Q_N$ (the analogue of Lemma \ref{V} in this context), whose proof we leave as an exercise.

\begin{lem}\label{V2}
There exists a constant $C_k>0$ such that for any given $B\subseteq Q_N$ and $1\leq M\leq N$ 
\[\text{\# of monomial differences in $B$}\geq \left(\frac{|B|/N^k-(D_k(M)+C_k)/M^k}{(M^{k+2/(k+1)})^2}\right) N^{k+2/(k+1)}.\]
\end{lem}

}
%\newpage

\section{Proof of Lemma \ref{major} (Major arc estimate)}\label{A3}

\comment{Before launching into this, we make the important observation that the major arcs (and hence the refined major arcs) are a union of (necessarily short) pairwise disjoint intervals.
\begin{lem}
If $a/q\ne a'/q'$ with $1\leq q,q'\leq N^{1/20}$, then $\mathbf{M}'_{a/q}\cap \mathbf{M}'_{a'/q'}=\emptyset$.
\end{lem}
\begin{proof}
Suppose that $\mathbf{M}'_{a/q}\cap \mathbf{M}'_{a'/q'}\ne\emptyset$. Using the fact that $aq'-a'q\ne0$, we see that
\[\frac{2}{N^{1-1/20}}\geq\Bigl|\frac{a}{q}-\frac{a'}{q'}\Bigr|=\Bigl|\frac{aq'-a'q}{qq'}\Bigr|\geq \frac{1}{qq'}\geq\frac{1}{N^{1/10}},\]
a contradiction.
\end{proof}

\begin{proof}[Proof of Lemma \ref{minorI}]
It follows from the Dirichlet principle and the fact that $\A$ is in a minor arc that there exists a reduced fraction $a/q$ with \[N^{1/20}\leq q\leq N^{1-1/20}\] such that $|\A-a/q|\leq q^{-2}$. It therefore follows from the Weyl inequality that
\[|\widehat{S}(\A)|\leq 50 N^{1/2-1/40}\log N\leq C N^{1/2-1/80}.\qedhere\]
\end{proof}
}

The proof of Lemma \ref{major} hinges on the key observation that for each $\A$ in a major arc corresponding to a rational $a/q$, our exponential sum $\widehat{S}(\A)$ breaks naturally into an arithmetic part $S(a,q)$ and a continuous part $I_N(\A-a/q)$,
up to a manageable error term. In particular we have

\begin{lem}\label{approxpropn}
If $\A\in\mathbf{M}'_{a/q}$ with $1\leq q\leq N^{1/20}$, then
\be\label{approx}
\widehat{S}(\A)=\sqrt{N}\,q^{-1}S(a,q)I_N(\A-a/q)+O(N^{1/10})
\ee
%and 
%\[I_M(\B):=\int_0^M e^{2\pi i\B x^2}dx%=M\int_0^1 e^{2\pi i M^2\B x^2}dx=MI_1(M^2\B)
%.\]
where
\[S(a,q)=\sum_{r=0}^{q-1}e^{-2\pi iar^2/q}\quad\text{and}\quad I_N(\B)=\int_0^{1} e^{-2\pi i N\B x^2}dx.\]

\end{lem}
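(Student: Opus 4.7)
The plan is to write $\alpha = a/q + \beta$ with $|\beta| \leq N^{-19/20}$ and decompose the summation variable $n$ according to its residue modulo $q$, which separates the arithmetic data at $a/q$ from the slowly-varying factor depending on $\beta$, and then approximate the resulting sum over each residue class by an integral.

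More precisely, writing $n = qm + r$ with $0 \leq r \leq q-1$, the identity $(qm+r)^2 \equiv r^2 \pmod{q}$ gives
\[
\widehat{S}(\alpha) = \sum_{r=0}^{q-1} e^{-2\pi i a r^2/q} \sum_{\substack{m \geq 0 \\ 1 \leq qm+r \leq \sqrt{N}}} e^{-2\pi i (qm+r)^2 \beta}.
\]
The inner phase $f_r(m) := (qm+r)^2\beta$ satisfies $|f_r'(m)| = |2q(qm+r)\beta| \leq 2q\sqrt{N}|\beta| \leq 2 q N^{-9/20}$ on the relevant range. Comparing the sum to the integral one residue class at a time (standard trapezoidal estimate: $|\sum_{m=0}^{M_r} g(m) - \int_0^{M_r} g(x)\,dx| \leq \tfrac{1}{2}M_r \sup|g'| + O(1)$), with $M_r \leq \sqrt{N}/q$, produces an error of $O(q \cdot N^{-9/20} \cdot \sqrt{N}/q) + O(1) = O(N^{1/20})$ per residue class, hence $O(q \cdot N^{1/20}) = O(N^{1/10})$ in total after summing over $r$.

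Next, I change variables $x = (qm+r)/\sqrt{N}$ in the integral, so that $dm = \sqrt{N}\,q^{-1}\,dx$ and $(qm+r)^2\beta = N\beta x^2$. The range of integration becomes approximately $[r/\sqrt{N},\,1]$, which differs from $[0,1]$ by an interval of length at most $q/\sqrt{N}$; since the integrand has modulus one, this substitution costs at most $O(1)$ per residue class and $O(q) = O(N^{1/20})$ overall. After this substitution each inner integral equals $\sqrt{N}\,q^{-1} I_N(\beta)$ up to the errors just accounted for, so
\[
\widehat{S}(\alpha) = \sqrt{N}\,q^{-1}\, I_N(\beta) \sum_{r=0}^{q-1} e^{-2\pi i a r^2/q} + O(N^{1/10}),
\]
which is exactly the claimed formula.

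The main technical point is the bookkeeping: verifying that each source of error (replacing the sum by the integral, truncating the lower limit of the rescaled integral, and ignoring fractional parts of $M_r$) is controlled by the hypothesis $|\beta| \leq N^{-19/20}$ together with $q \leq N^{1/20}$. Both conditions feed into the bound $q \sqrt{N}|\beta| \leq N^{-8/20}$, which is what makes the phase essentially constant on unit intervals in $m$ and thus allows the Riemann-sum approximation to succeed with total error $O(N^{1/10})$.
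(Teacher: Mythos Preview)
Your argument is correct and follows essentially the same route as the paper: write $\alpha=a/q+\beta$, split $n$ into residue classes modulo $q$, and replace the inner sum over $m$ by an integral using the derivative bound $|f'_r(m)|\le 2q\sqrt{N}|\beta|\le 2qN^{-9/20}$, giving a total error of $O(qN^{1/20})=O(N^{1/10})$. The only cosmetic difference is that the paper first replaces $(qm+r)^2$ by $(qm)^2$ in the phase before passing to the integral, whereas you keep $(qm+r)^2$ and absorb the $r$-dependence via the change of variables $x=(qm+r)/\sqrt{N}$; the error analysis is the same either way.
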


\begin{Rem}[on ``big O notation''] Whenever we write
$E=O(F)$ for any two quantities $E$ and $F$ we shall mean that $|E|\leq CF$, for some constant $C>0$.
\end{Rem}

\begin{proof}
We can write $\A=a/q+\B$ where $|\B|\leq 1/N^{19/20}$ and $1\leq q\leq N^{1/20}$. We can also write each $1\leq d\leq \sqrt{N}$ uniquely as $d=m q+r$
with $1\leq r\leq q$ and $0\leq m\leq \sqrt{N}/q$. It then follows that
\begin{align*}
\widehat{S}(\A)&=\sum_{r=1}^{q}\sum_{m=0}^{\sqrt{N}/q} e^{-2\pi i (a/q+\B)(m q+r)^2}+O(q)\\
&=\sum_{r=1}^{q}e^{-2\pi i ar^2/q}\sum_{m=0}^{\sqrt{N}/q} e^{-2\pi i \B(m q+r)^2} +O(q).
\end{align*}
Since 
\begin{align*}
\Bigl|e^{-2\pi i(m q+r)^2\B}-e^{-2\pi i m^2 q^2 \B}\Bigr|&\leq \Bigl|e^{-2\pi i(2m qr+r^2)\B}-1\Bigr|\leq Cdr|\B|\leq CqN^{-9/20}
%&\leq C\frac{M}{q}q^2\frac{1}{qM^{2-1/10}}\\
%&\leq CM^{-1+1/10}
\end{align*}
and 
\begin{align*}
\Bigl|\sum_{m=0}^{\sqrt{N}/q}e^{-2\pi i m^2 q^2 \B}-\int_0^{\sqrt{N}/q}e^{-2\pi i x^2 q^2 \B}dx\Bigr|&\leq\sum_{m=0}^{\sqrt{N}/q}\int_{m}^{m+1}\Bigl|e^{-2\pi i m^2 q^2 \B}-e^{-2\pi i x^2 q^2 \B}\Bigr|\,dx\\
&\leq \sum_{m=0}^{\sqrt{N}/q} 2\pi(2m+1)q^2|\B|\\
&\leq CN^{1/20}\end{align*}
it follows that
\[\Bigl|\widehat{S}(\A)-\sqrt{N}\,q^{-1}S(a,q)I_N(\B)\Bigr|\leq CN^{1/10}.\qedhere\]
\end{proof}

Lemma \ref{major} follows almost immediately from this and the two basic lemmas below. 
\begin{lem}[Gauss sum estimate]\label{gs}
If $(a,q)=1$, then $|S(a,q)|\leq \sqrt{2q}.$
More precisely,
\[|S(a,q)|=\begin{cases} 
\sqrt{q}\quad&\text{if \ $q$ odd}\\
\sqrt{2q}&\text{if \ $q\equiv0\mod 4$}\\
0&\text{if \ $q\equiv2\mod 4$}
\end{cases}.\]
\end{lem}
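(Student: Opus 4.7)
The plan is to compute $|S(a,q)|^2$ directly by expanding the square, carrying out a linear change of variables, and then using the orthogonality relation for characters on $\Z/q\Z$. The case split in the conclusion will emerge naturally from the parity of $q$.

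First I would write
\[
|S(a,q)|^2=\sum_{r,s=0}^{q-1}e^{2\pi i a(s^2-r^2)/q},
\]
and substitute $s=r+t\pmod q$, with $t$ running over $\{0,1,\dots,q-1\}$. Since $s^2-r^2\equiv 2rt+t^2\pmod q$, this becomes
\[
|S(a,q)|^2=\sum_{t=0}^{q-1}e^{2\pi i at^2/q}\sum_{r=0}^{q-1}e^{2\pi i (2at)r/q}.
\]
The inner $r$-sum, by orthogonality, equals $q$ if $q\mid 2at$ and $0$ otherwise, so everything reduces to identifying which values of $t$ satisfy $q\mid 2at$.

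Next I would do the case analysis. If $q$ is odd, then using $(a,q)=1$ one has $q\mid 2at$ iff $q\mid t$, so only $t=0$ contributes and $|S(a,q)|^2=q$, giving $|S(a,q)|=\sqrt q$. If $q$ is even, write $q=2m$; the condition $q\mid 2at$ becomes $m\mid at$, and since $(a,q)=1$ forces $a$ odd and $(a,m)=1$, this is equivalent to $m\mid t$. Thus the surviving values are $t=0$ and $t=m$, giving
\[
|S(a,q)|^2=q\bigl(1+e^{2\pi i am^2/q}\bigr)=q\bigl(1+(-1)^{am}\bigr)=q\bigl(1+(-1)^m\bigr),
\]
where in the last step we used that $a$ is odd. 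This is $2q$ when $m$ is even (i.e.\ $q\equiv 0\pmod 4$) and $0$ when $m$ is odd (i.e.\ $q\equiv 2\pmod 4$), matching the stated formulas and in all cases giving the uniform bound $|S(a,q)|\leq\sqrt{2q}$.

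There is no real obstacle here; the only point requiring any care is the handling of the $t=m$ term in the even case, specifically the observation that $am^2/q=am/2$ so that the corresponding exponential equals $(-1)^{am}$, and then that $(a,q)=1$ forces $a$ odd so the factor $(-1)^{am}$ simplifies to $(-1)^m$. Everything else is bookkeeping.
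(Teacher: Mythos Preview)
Your proof is correct and follows essentially the same approach as the paper: square out $|S(a,q)|^2$, make the substitution $s=r+t$, apply orthogonality to the inner sum, and then split into the odd/even cases. The paper is terser in the even case, simply writing $|S(a,q)|^2=q\bigl(e^{2\pi ia(q/4)}+1\bigr)$, while you spell out the $q=2m$ bookkeeping more explicitly, but the argument is the same.
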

%We remark that the \emph{truth} of the matter (which is clear from the proof) is that 

\begin{lem}[Oscillatory integral estimate]\label{oi}
For any $\lm\geq0$
\[\Bigl|\int_0^1 e^{2\pi i \lm x^2}dx\Bigr|\leq \min\{1,2\lm^{-1/2}\}\leq 2\sqrt{2}(1+\lm)^{-1/2}.\]
\end{lem}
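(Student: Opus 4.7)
The plan is to split the estimate into three pieces. The trivial bound $|{\int_0^1 e^{2\pi i \lm x^2}dx}|\leq 1$ is immediate from $|e^{2\pi i\lm x^2}|=1$, so the content of the lemma is the decay bound $2\lm^{-1/2}$ for large $\lm$ (and a final elementary comparison to $2\sqrt{2}(1+\lm)^{-1/2}$).

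The first step is to rescale. Assuming $\lm>0$, the substitution $u=\sqrt{\lm}\,x$ converts the integral into
\[
\int_0^1 e^{2\pi i\lm x^2}\,dx=\frac{1}{\sqrt{\lm}}\int_0^{\sqrt{\lm}} e^{2\pi i u^2}\,du,
\]
so it suffices to establish the uniform Fresnel-type bound $\bigl|\int_0^T e^{2\pi i u^2}\,du\bigr|\leq 2$ for every $T\geq 0$. For $T\leq 1$ this is trivial, so assume $T\geq 1$ and split the interval as $[0,1]\cup[1,T]$. The contribution from $[0,1]$ is at most $1$ by the trivial estimate.

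For the tail $[1,T]$ I would use an integration-by-parts argument based on the identity $e^{2\pi i u^2}=\frac{1}{4\pi i u}\frac{d}{du}\bigl(e^{2\pi i u^2}\bigr)$, which gives
\[
\int_1^T e^{2\pi i u^2}\,du=\left[\frac{e^{2\pi i u^2}}{4\pi i u}\right]_1^T+\int_1^T \frac{e^{2\pi i u^2}}{4\pi i u^2}\,du.
\]
The boundary terms are bounded in absolute value by $\frac{1}{4\pi}(1+1/T)\leq \frac{1}{2\pi}$, and the remaining integral is bounded by $\frac{1}{4\pi}\int_1^T u^{-2}\,du\leq \frac{1}{4\pi}$. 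Adding the two pieces yields $\bigl|\int_0^T e^{2\pi iu^2}du\bigr|\leq 1+\frac{3}{4\pi}<2$, and hence $\bigl|\int_0^1 e^{2\pi i\lm x^2}dx\bigr|\leq 2\lm^{-1/2}$.

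Finally, the second inequality $\min\{1,2\lm^{-1/2}\}\leq 2\sqrt{2}(1+\lm)^{-1/2}$ is a one-line calculus check: for $\lm\leq 1$ one has $2\sqrt{2}(1+\lm)^{-1/2}\geq 2\sqrt{2}\cdot 2^{-1/2}=2\geq 1$, while for $\lm\geq 1$ one has $1+\lm\leq 2\lm$, hence $2\sqrt{2}(1+\lm)^{-1/2}\geq 2\sqrt{2}(2\lm)^{-1/2}=2\lm^{-1/2}$. There is no real obstacle here; the only mildly subtle point is being careful enough in the integration by parts to get an explicit numerical constant better than $2$, which is what makes the final clean bound of $2$ fall out.
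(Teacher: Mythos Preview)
Your proof is correct and essentially identical to the paper's: after your substitution $u=\sqrt{\lm}\,x$, your split $[0,1]\cup[1,T]$ is precisely the paper's split $[0,\lm^{-1/2}]\cup[\lm^{-1/2},1]$, and the integration by parts is the same. You additionally spell out the elementary check of the second inequality $\min\{1,2\lm^{-1/2}\}\leq 2\sqrt{2}(1+\lm)^{-1/2}$, which the paper leaves implicit.
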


\begin{proof}[Proof of Lemma \ref{major}]
Lemmas \ref{gs} and \ref{oi} imply that the main term in (\ref{approx})
\[\sqrt{N}\,q^{-1}S(a,q)I_N(\A-a/q)\leq 4\sqrt{N}q^{-1/2}(1+N|\A-a/q|)^{-1/2}\]
and since 
$q^{-1/2}\geq N^{-1/40}$ and $N\,|\A-a/q|\leq N^{1/20}$, it follows that
\[N^{1/10}\ll\sqrt{N}\,q^{-1/2}(1+N|\A-a/q|)^{-1/2}.\qedhere\]
\end{proof}

\begin{proof}[Proof of Lemma \ref{gs}]
Squaring-out $S(a,q)$ we obtain
\[|S(a,q)|^2=\sum_{s=0}^{q-1}\sum_{r=0}^{q-1}e^{2\pi ia(r^2-s^2)/q}.\]
Letting $r=s+t$ and using the fact that $(a,q)=1$ and 
\[\sum_{s=0}^{q-1}e^{2\pi ia(2st)/q}=\begin{cases}
q\quad&\text{if}\quad 2at\equiv 0\mod q\\
0&\text{otherwise}
\end{cases}\]
it follows that
\[|S(a,q)|^2=\sum_{t=0}^{q-1}e^{2\pi iat^2/q}\sum_{s=0}^{q-1}e^{2\pi ia(2st)/q}
%\leq\sum_{t=0}^{q-1}\Bigl|\sum_{s=0}^{q-1}e^{2\pi ia(2st)/q}\Bigr|
=\begin{cases}
q \ &\text{if \ $q$ odd}\\
q\left(e^{2\pi ia(q/4)}+1\right) \ &\text{if \ $q$ even}
\end{cases}.\qedhere\] 
\end{proof}

\begin{proof}[Proof of Lemma \ref{oi}]
We need only consider the case when $\lm\geq1$. We write
\[\int_0^1 e^{2\pi i \lm x^2}dx=\int_0^{\lm^{-1/2}}e^{2\pi i \lm x^2}dx+\int_{\lm^{-1/2}}^1 e^{2\pi i \lm x^2}dx=:I_1+I_2.\]
It is easy to then see that
$|I_1|\leq \lm^{-1/2},$
while integration by parts gives that
\begin{align*}
|I_2|&=\left| \int_{\lm^{-1/2}}^1 \frac{1}{4\pi i\lm x}\Bigl(\frac{d}{dx}e^{2\pi i \lm x^2}\Bigr)dx\right|\\
&\leq\frac{1}{4\pi \lm}\left|\left[\frac{1}{x}\,e^{2\pi i\lm x^2}\right]^1_{\lm^{-1/2}}+\int_{\lm^{-1/2}}^1\frac{1}{x^2}\,e^{2\pi i\lm x^2}dx\right|\\
&\leq \lm^{-1/2}.\qedhere
\end{align*}
\end{proof}

\section{Justification of inequality (\ref{start}): the purported trivial bounds for $D(N)$}\label{trivial}

\subsection{Upper bounds} Let $A\subseteq\{1,\dots,N\}$ with no square differences. 

It clearly follows that
$A\cap(A+t^2)=\emptyset$
for all $t\in\N$ and in particular that
\[|A|\leq (N+1)/2\]
since $|(A+1)\cap\{1,\dots,N\}|\geq |A|-1$ and hence
\[2|A|-1\leq|(A\cup(A+1))\cap\{1,\dots,N\}|\leq N.\]

In order to obtain the superior bound (at least when $N\geq 65$) of \[|A|\leq(N+34)/3,\] one can use the further observation that if $(r,s,t)$ form a Pythagorean triple with $r^2+s^2=t^2$, then
\[A\cap(A+s^2)=A\cap(A+t^2)=(A+s^2)\cap(A+t^2)=\emptyset.\]
In particular, taking $s=3$ and $t=5$, it follows that
\[3|A|-34\leq|\left(A\cup(A+9)\cup(A+25)\right)\cap\{1,\dots,N\}|\leq N,\]
as required, since clearly $|(A+9)\cap\{1,\dots,N\}|\geq |A|-9$ and $|(A+25)\cap\{1,\dots,N\}|\geq |A|-25$.

The superior bound (at least when $N\geq 1630193$) of \[|A|\leq(N+543443)/4\] claimed in the introduction, follows (as above) once one observes that 
\[153^2, 185^2,  697^2, 185^2-153^2, 697^2-185^2, 697^2-153^2\]
are all perfect squares.

\subsection{Lower bound}\label{a2}
We now show that given any subset $H$ of the natural numbers and any $N\in\N$, there always exists a set $A\subseteq\{1,\dots,N\}$ such that
$(A-A)\cap H=\emptyset$
and
\be\label{r}|A|\geq \frac{N-1}{|H\cap\{1,\dots,N\}|+1}.\ee
Taking $H$ to be the set of square numbers, this corresponds to the desired lower bound
$D(N)\geq \sqrt{N}-1.$

We construct the set $A$ %=\{a_1,\dots,a_K\}$ 
 recursively as follows: 
Select $a_1=1$ to be the first element in $A$. Having selected $a_1,\dots,a_k$, with $k\geq1$, we define $X_k=\{a_1,\dots,a_k\}+H\cap\{1,\dots,N\}$ and select $a_{k+1}$ to be the smallest element in $\{1,\dots,N\}\setminus\{a_1,\dots,a_k,X_k\}$. In order to guarantee the existence of such an element $a_{k+1}$, we clearly must have $|\{a_1,\dots,a_k,X_k\}|\leq N-1$, and since it is possible that
  $|\{a_1,\dots,a_k,X_k\}|=k\left(|H\cap\{1,\dots,N\}|+1\right)$, this corresponds to the restriction that
  \[k\leq\left\lfloor\frac{N-1}{|H\cap\{1,\dots,N\}|+1}\right\rfloor\]
from which (\ref{r}) immediately follows.
%\newpage

%\newpage

\end{document}